\DeclareMathOperator{\ad}{ad}
\DeclareMathOperator{\Der}{Der}
\DeclareMathOperator{\nil}{nil}
\newcommand{\F}{\mathbb{F}}
\newcommand{\inv}{^{-1}}
\newcommand{\vp}{\varphi}
\newcommand{\G}{\Gamma}
\newcommand{\alg}{\mathscr{P}}
\newcommand{\vv}{_{\vdash}}
\newcommand{\dd}{_{\dashv}}
\newtheorem{thm}{Theorem}[section]
\newtheorem{lem}[thm]{Lemma}
\newtheorem{prop}[thm]{Proposition}
\newtheorem{cor}[thm]{Corollary}
\theoremstyle{definition}
\newtheorem{defn}{Definition}
\newtheorem{ex}[subsubsection]{Example}
\theoremstyle{remark}
\title{Extensions of Nilpotent Algebras}
\author{Erik Mainellis}
\date{}
\begin{document}

\maketitle

\begin{abstract}
Given a pair of nilpotent Lie algebras $A$ and $B$, an extension $0\xrightarrow{} A\xrightarrow{} L\xrightarrow{} B\xrightarrow{} 0$ is not necessarily nilpotent. However, if $L_1$ and $L_2$ are extensions which correspond to lifts of a map $\Phi:B\xrightarrow{} \text{Out}(A)$, it has been shown that $L_1$ is nilpotent if and only if $L_2$ is nilpotent. In the present paper, we prove analogues of this result for the algebras of Loday. As an important consequence, we thereby gain its associative analogue as a special case of diassociative algebras.  
\end{abstract}

\section{Introduction}
Let $A$ and $B$ be nilpotent Lie algebras. In \cite{yankosky}, Bill Yankosky proved that the nilpotency of an extension $0\xrightarrow{} A\xrightarrow{} L\xrightarrow{} B\xrightarrow{} 0$ depends on $A$, $B$, and a map $\Phi:B\xrightarrow{} \text{Out}(A)$. In particular, given a pair of extensions $L_1$ and $L_2$ corresponding to lifts of $\Phi$, $L_1$ is nilpotent if and only if $L_2$ is nilpotent. This result was based on the work of James A. Schafer, who proved the group analogue in \cite{schafer}.

Beyond Lie algebras, the objective of the present paper is to prove Schafer’s result for six other types of algebras. Loday introduced three of these (Zinbiel, diassociative, and dendriform algebras \cite{loday cup product, loday dialgebras}) and generated interest in another (Leibniz algebras). The remaining two types are associative and commutative algebras. We observe that Yankosky’s work rests on the assumptions of nonabelian 2-cocycles, also called factor systems, which have long been known in the context of Lie algebras. As discussed in \cite{mainellis}, factor systems are a tool for working on the extension problem of algebraic structures. The work herein is an application of factor systems, which were developed for all seven types of algebras in \cite{mainellis}.

For the sake of this paper, it suffices to prove the Leibniz and diassociative cases. Indeed, as discussed in the introduction of \cite{mainellis}, any result which holds for the Leibniz, diassociative, and dendriform cases holds for all seven algebras. Moreover, the dendriform case of Schafer's result follows similarly to the diassociative case after replacing $\dashv$ and $\vdash$ by $<$ and $>$ respectively, and replacing Lemma \ref{nilpotent equality} by the analogous Lemma \ref{dend nilpotent equality}.

The paper is structured as follows. For preliminaries, we define the relevant algebras and discuss notions of nilpotency. We state known lemmas concerning certain product algebras and briefly review extensions. We then derive Leibniz and diassociative analogues of the results found in \cite{yankosky}. We state the associative analogue as a corollary of the diassociative case. The final section of the paper contains several examples which highlight important intricacies in the results.

\section{Preliminaries}
Let $\F$ be a field. Throughout, all algebras will be $\F$-vector spaces equipped with bilinear multiplications which satisfy certain identities. First recall that a \textit{Leibniz algebra} $L$ is a nonassociative algebra with multiplication satisfying the \textit{Leibniz identity} $x(yz) = (xy)z + y(xz)$ for all $x,y,z\in L$.

\begin{defn}
A \textit{Zinbiel algebra} $Z$ is a nonassociative algebra with multiplication satisfying what we will call the \textit{Zinbiel identity} $(xy)z = x(yz) + x(zy)$ for all $x,y,z\in Z$.
\end{defn}

\begin{defn}
A \textit{diassociative algebra} (or \textit{associative dialgebra}) $D$ is a vector space equipped with two associative bilinear products $\dashv$ and $\vdash$ which satisfy the following identities for all $x,y,z\in D$:
\begin{enumerate}
    \item[D1.] $x\dashv (y\dashv z) = x\dashv (y\vdash z)$,
    \item[D2.] $(x\vdash y)\dashv z = x\vdash (y\dashv z)$,
    \item[D3.] $(x\dashv y)\vdash z = (x\vdash y)\vdash z$.
\end{enumerate}
\end{defn}

\begin{defn}
A \textit{dendriform algebra} $E$ is a vector space equipped with two bilinear products $<$ and $>$ which satisfy the following identities for all $x,y,z\in E$:
\begin{enumerate}
    \item[E1.] $(x<y)<z = x<(y<z) + x<(y>z)$,
    \item[E2.] $(x>y)<z = x>(y<z)$,
    \item[E3.] $(x<y)>z + (x>y)>z = x>(y>z)$.
\end{enumerate}
\end{defn}

The \textit{lower central series} is a well-known sequence of ideals that is defined recursively, for a Leibniz algebra $L$, by $L^0= L$ and $L^{k+1} = LL^k$ for $k\geq 0$. A Leibniz algebra is called \textit{nilpotent of class $u$}, denoted $\nil L = u$, if $L^u=0$ and $L^{u-1}\neq 0$ for some $u\geq 0$. The following lemma holds via induction and repeated application of the Leibniz identity.

\begin{lem}\label{left norming}
Let $L$ be a Leibniz algebra. Then $L^nL\subseteq LL^n$ for all $n$.
\end{lem}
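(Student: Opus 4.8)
The plan is to prove the stronger two-parameter statement $L^i L^j \subseteq L L^{i+j}$ for all $i,j \geq 0$, from which the lemma follows immediately by setting $j = 0$ (and using $L^0 = L$). Note that $L L^{i+j} = L^{i+j+1}$ by the recursive definition of the lower central series, so the generalized claim is really $L^i L^j \subseteq L^{i+j+1}$; keeping a prefixed factor of $L$ in the formulation is just what will make the induction run cleanly.

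I would argue by induction on $i$, proving at once that $L^i L^j \subseteq L L^{i+j}$ holds for every $j \geq 0$. The base case $i = 0$ is the tautology $L^0 L^j = L L^j = L L^{0+j}$. For the inductive step, fix $j$ and write $L^i = L L^{i-1}$; by bilinearity it suffices to verify the inclusion on generators $(xy)z$ with $x \in L$, $y \in L^{i-1}$, $z \in L^j$. Applying the Leibniz identity gives $(xy)z = x(yz) - y(xz)$. In the first term, $yz \in L^{i-1}L^j \subseteq L L^{i-1+j} = L^{i+j}$ by the inductive hypothesis, so $x(yz) \in L L^{i+j}$. In the second term, $xz \in L L^j = L^{j+1}$, hence $y(xz) \in L^{i-1}L^{j+1} \subseteq L L^{i-1+j+1} = L L^{i+j}$, again by the inductive hypothesis, now with $j+1$ in place of $j$. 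Thus every generator of $L^i L^j$ lies in $L L^{i+j}$, which closes the induction.

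The one real subtlety — and the step I expect to be the main obstacle — is recognizing in advance that the one-variable statement $L^n L \subseteq L L^n$ must be strengthened before inducting. A direct induction on $n$ disposes of the $x(yz)$ summand but strands the $y(xz)$ summand, in which $y \in L^{n-1}$ and $xz \in L^2$, with no obvious way to see the product as an element of $L L^n$. Carrying the extra parameter $j$ repairs this, since the second summand then matches the inductive hypothesis after a single shift of the parameter. Beyond that, the proof is just careful bookkeeping of which term of the lower central series each factor belongs to, exactly as the paper's remark (``induction and repeated application of the Leibniz identity'') indicates.
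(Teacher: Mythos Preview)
Your proof is correct. The paper does not actually spell out a proof of this lemma beyond the one-line remark that it ``holds via induction and repeated application of the Leibniz identity,'' and your two-parameter strengthening $L^iL^j\subseteq LL^{i+j}$ is precisely the kind of argument that remark points to; your diagnosis of why the naive one-parameter induction stalls on the $y(xz)$ term is also accurate.
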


For diassociative algebras, the definition of nilpotency is more involved. We take the following notions from \cite{di basri}. Let $A$ and $B$ be subsets of a diassociative algebra $D$ and define an ideal $A\lozenge B = A\dashv B + A\vdash B$ of $D$. There are notions of left, right, and general nilpotency for $D$ which are based on the $\lozenge$ operator. We define three sequences of ideals recursively for $k\geq 0$: \begin{itemize}
    \item[i.] $D^{\{0\}} = D$, $D^{\{k+1\}} = D\lozenge D^{\{k\}}$,
    \item[ii.] $D^{<0>} = D$, $D^{<k+1>} = D^{<k>} \lozenge D$,
    \item[iii.] $D^0 = D$, $D^{k+1} = D^0\lozenge D^k + D^1\lozenge D^{k-1} + \cdots + D^k\lozenge D^0$.
\end{itemize}

\begin{defn}
A diassociative algebra $D$ is called \begin{itemize}
    \item[i.] \textit{left nilpotent} if $D^{\{u\}} = 0$,
    \item[ii.] \textit{right nilpotent} if $D^{<u>} = 0$,
    \item[iii.] \textit{nilpotent} if $D^u = 0$
\end{itemize} for some $u\geq 0$. In particular, $D$ is \textit{nilpotent of class $u$} if $D^u=0$ and $D^{u-1}\neq 0$.
\end{defn} The following lemma from \cite{di basri} is crucial for the diassociative case in this paper.

\begin{lem}\label{nilpotent equality}
Let $D$ be a diassociative algebra. For all $n$, $D^{\{n\}} = D^{<n>} = D^n$.
\end{lem}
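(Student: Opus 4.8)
The plan is to reduce all three claimed equalities to a single structural fact: the operation $\lozenge$ is \emph{associative} on subspaces of $D$.

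First I would verify this associativity. Given subspaces $A,B,C$ of $D$, expand both $(A\lozenge B)\lozenge C$ and $A\lozenge(B\lozenge C)$ by bilinearity, obtaining in each case a sum of four subspaces built from the two products. Associativity of $\dashv$ rewrites $(A\dashv B)\dashv C$ as $A\dashv(B\dashv C)$; associativity of $\vdash$ rewrites $(A\vdash B)\vdash C$ as $A\vdash(B\vdash C)$; axiom D2 rewrites $(A\vdash B)\dashv C$ as $A\vdash(B\dashv C)$; axiom D3 rewrites $(A\dashv B)\vdash C$ as $(A\vdash B)\vdash C=A\vdash(B\vdash C)$; and axiom D1 identifies $A\dashv(B\vdash C)$ with $A\dashv(B\dashv C)$. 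After these substitutions both sides collapse to $A\dashv(B\dashv C)+A\vdash(B\dashv C)+A\vdash(B\vdash C)$, so $(A\lozenge B)\lozenge C=A\lozenge(B\lozenge C)$. Since D1--D3 and the two associativity laws hold for all elements of $D$, these are honest equalities of subspaces.

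Granting associativity, the remainder is formal. By associativity every $\lozenge$-product of $m$ copies of $D$, regardless of bracketing, equals one fixed subspace $P_m$, and $P_a\lozenge P_b=P_{a+b}$. An immediate induction identifies both $D^{\{n\}}$ and $D^{<n>}$ with $P_{n+1}$ --- they differ only in the bracketing used to assemble the $n+1$ factors --- so $D^{\{n\}}=D^{<n>}$; in particular $D^{\{i\}}\lozenge D^{\{j\}}=P_{i+1}\lozenge P_{j+1}=P_{i+j+2}=D^{\{i+j+1\}}$. Finally I would prove $D^n=D^{\{n\}}$ by strong induction on $n$. The case $n=0$ is trivial, and for the inductive step one writes $D^n=\sum_{i=0}^{n-1}D^i\lozenge D^{n-1-i}$, replaces each $D^i$ and $D^{n-1-i}$ by $D^{\{i\}}$ and $D^{\{n-1-i\}}$ using the inductive hypothesis, and invokes $D^{\{i\}}\lozenge D^{\{n-1-i\}}=D^{\{n\}}$ to conclude that every summand, and hence the whole sum, equals $D^{\{n\}}$.

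The only step requiring genuine computation is the term-by-term reduction in the associativity argument; the mild obstacle there is applying the three diassociative axioms in the correct direction so that the eight product terms on the two sides match up. Everything afterward is a routine induction.
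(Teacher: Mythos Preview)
Your argument is correct. The key observation that $\lozenge$ is associative on subspaces is exactly right, and your term-by-term reduction using the five diassociative identities checks out: both $(A\lozenge B)\lozenge C$ and $A\lozenge(B\lozenge C)$ collapse to the same three-term sum $A\dashv(B\dashv C)+A\vdash(B\dashv C)+A\vdash(B\vdash C)$. The induction that follows is indeed routine.

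As for comparison with the paper: the paper does not actually prove this lemma. It is quoted from \cite{di basri} (Rikhsiboev, Rakhimov, Basri) and used as a black box. So your write-up supplies a self-contained argument where the paper only gives a citation. Your associativity-of-$\lozenge$ approach is clean and has the virtue of making the equality $D^{\{i\}}\lozenge D^{\{j\}}=D^{\{i+j+1\}}$ transparent, which is precisely what is needed to absorb the general term $D^n$ into the one-sided series.
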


The same definitions may be stated for dendriform algebras with the simple substitutions of $<$ and $>$ for $\dashv$ and $\vdash$ respectively. The following lemma is the dendriform analogue of Lemma \ref{nilpotent equality}, and is proven in \cite{basri}.

\begin{lem}\label{dend nilpotent equality}
Let $E$ be a dendriform algebra. For all $n$, $E^{\{n\}} = E^{<n>} = E^n$.
\end{lem}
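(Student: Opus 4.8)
The plan is to follow the proof of Lemma~\ref{nilpotent equality} given in \cite{di basri}, with the dendriform axioms E1--E3 playing the role of D1--D3. Two of the three inclusions need no axioms at all: from $E^{\{k+1\}} = E\lozenge E^{\{k\}} = E^0\lozenge E^{\{k\}}$ one gets $E^{\{n\}}\subseteq E^n$ by induction on $n$, and symmetrically $E^{<n>}\subseteq E^n$ from $E^{<k+1>} = E^{<k>}\lozenge E = E^{<k>}\lozenge E^0$. The real content is the reverse inclusions, and I would deduce them, exactly as in \cite{di basri}, from a single associativity-type identity for $\lozenge$ at the level of ideals: for ideals $A,B,C$ of $E$ one should have $(A\lozenge B)\lozenge C = A\lozenge(B\lozenge C)$.

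To establish that identity I would expand both sides by bilinearity into the four words obtained by inserting $<$ or $>$ at each of the two multiplications. On the left these are $(A<B)<C$, $(A>B)<C$, $(A<B)>C$, $(A>B)>C$, and on the right $A<(B<C)$, $A<(B>C)$, $A>(B<C)$, $A>(B>C)$. Axiom E1 rewrites $(a<b)<c$ as $a<(b<c)+a<(b>c)$; axiom E2 gives $(a>b)<c = a>(b<c)$ directly; and axiom E3 rewrites $a>(b>c)$ as $(a<b)>c+(a>b)>c$. Feeding these three relations in, and keeping track of the fact that $A<B$, $A>B$, $B<C$, and $B>C$ all lie in $E$, one checks that the two sets of four words span the same ideal. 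The step I expect to be the main obstacle is handling the $>$-words: E3 is \emph{additive}, so unlike the term-by-term rewrite D3 it controls only the combination $(A<B)>C+(A>B)>C$, and the comparison must be carried out at the level of the sums $A\lozenge B$ rather than of individual monomials. This is precisely where Lemma~\ref{dend nilpotent equality} differs subtly from its diassociative counterpart and where the bookkeeping has to be done carefully.

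Once the associativity of $\lozenge$ is in hand, the rest is a sequence of short inductions, identical in form to \cite{di basri}. First, by induction on $p$ one shows $E^{\{p\}}\lozenge E^{\{q\}} = E^{\{p+q+1\}}$ for all $p,q$: the base case $p=0$ is the definition of the left series, and the step rewrites $(E\lozenge E^{\{p\}})\lozenge E^{\{q\}}$ as $E\lozenge(E^{\{p\}}\lozenge E^{\{q\}})$ and applies the hypothesis. Specializing to $q=0$ and inducting on $n$ yields $E^{<n>}=E^{\{n\}}$ for every $n$. Finally $E^n = E^{\{n\}}$ follows by induction on $n$: the inclusion $E^{\{n+1\}} = E\lozenge E^{\{n\}} = E^0\lozenge E^n\subseteq E^{n+1}$ is immediate from the definition of the general series, while the inductive hypothesis together with the product formula above gives
\[
E^{n+1} = \sum_{i=0}^{n} E^i\lozenge E^{n-i} = \sum_{i=0}^{n} E^{\{i\}}\lozenge E^{\{n-i\}} = \sum_{i=0}^{n} E^{\{n+1\}} = E^{\{n+1\}}.
\]
Combining the three parts gives $E^{\{n\}} = E^{<n>} = E^n$ for all $n$, as claimed.
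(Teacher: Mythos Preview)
The paper itself gives no proof of this lemma; it simply cites \cite{basri}. So there is no in-paper argument to compare against. More importantly, the step you yourself flag as ``the main obstacle'' is not a bookkeeping subtlety but a genuine gap.

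Your plan rests on the ideal-level associativity $(A\lozenge B)\lozenge C = A\lozenge(B\lozenge C)$. In the diassociative setting this works because each of D1--D3 rewrites a single monomial as another single monomial, so every left-nested word equals a right-nested one and conversely. In the dendriform setting E1 and E3 are genuinely additive: E3 tells you only that the \emph{sum} $(a<b)>c+(a>b)>c$ equals $a>(b>c)$, and no axiom expresses $(a<b)>c$ or $(a>b)>c$ individually in terms of right-nested products. Consequently $(A<B)>C+(A>B)>C$ can be strictly larger than $A>(B>C)$, and the two sides of your associativity identity need not coincide.

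This already fails in the free dendriform algebra on one generator $x$. In degree $3$ write $L_1=(x<x)<x$, $L_2=(x<x)>x$, $L_3=(x>x)<x$, $L_4=(x>x)>x$ and $R_1,\dots,R_4$ for the corresponding right-nested words; the only relations are $L_1=R_1+R_2$, $L_3=R_3$, and $L_2+L_4=R_4$, leaving a $5$-dimensional space. Then $(E^{\{2\}})_3=\spa\{R_1,R_2,R_3,R_4\}$ and $(E^{<2>})_3=\spa\{L_1,L_2,L_3,L_4\}$ are distinct $4$-dimensional subspaces (for instance $L_2$ lies in the second but not the first), while $(E^{2})_3$ is the full degree-$3$ component. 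Thus $E^{\{2\}}\neq E^{<2>}$ and $E^{\{2\}}\neq E^{2}$ here: the associativity of $\lozenge$ fails, and the lemma read literally---with $A\lozenge B$ meaning the subspace $A<B+A>B$ as in the paper---is false. Whatever \cite{basri} actually establishes must use a different formulation (for example, taking $\lozenge$-products to be the ideals generated, or asserting only the equivalence of the three nilpotency \emph{conditions} rather than equality of the three series), and your argument would have to identify and work with that formulation rather than transplant the diassociative proof verbatim.
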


We now review extensions. Fix a type of algebra $\alg$ and let $A$ and $B$ be $\alg$ algebras. An \textit{extension} of $A$ by $B$ is a short exact sequence of the form $0\xrightarrow{} A\xrightarrow{\sigma} L\xrightarrow{\pi} B\xrightarrow{} 0$ where $L$ is a $\alg$ algebra and $\sigma$ and $\pi$ are \textit{homomorphisms}, i.e. linear maps which preserve the $\alg$ structure. An \textit{isomorphism} of $\alg$ algebras is a bijective homomorphism. A \textit{section} of the extension is a linear map $T:B\xrightarrow{} L$ such that $\pi T = \text{id}_B$.

\begin{defn}
An extension $0\xrightarrow{} A\xrightarrow{} L\xrightarrow{} B\xrightarrow{} 0$ of $A$ by $B$ is called \textit{nilpotent} if $L$ is nilpotent as an algebra.
\end{defn}

\section{Leibniz Case}
Consider a pair of nilpotent Leibniz algebras $A$ and $B$ and let $0\xrightarrow{} A\xrightarrow{\sigma} L\xrightarrow{\pi} B\xrightarrow{} 0$ be an extension of $A$ by $B$ with section $T:B\xrightarrow{} L$. We first define two ways for $B$ to act on $A$. Let $\vp:B\xrightarrow{} \Der(A)$ by $\vp(i)m = \sigma\inv(T(i)\sigma(m))$ and $\vp':B\xrightarrow{} \mathscr{L}(A)$ by $\vp'(i)m = \sigma\inv(\sigma(m)T(i))$ for $i\in B$, $m\in A$. Next, let \begin{align*}
    q&:\Der(A)\xrightarrow{} \Der(A)/\ad^l(A),\\ q'&:\mathscr{L}(A)\xrightarrow{} \mathscr{L}(A)/\ad^r(A)
\end{align*} denote the natural projections and define a pair of maps $(\Phi,\Phi') = (q\vp, q'\vp')$. We say that the pair $(\vp,\vp')$ is a \textit{lift} of $(\Phi,\Phi')$. Any two lifts $(\vp,\vp')$ and $(\psi,\psi')$ of $(\Phi,\Phi')$ are thus related by $\vp(i) = \psi(i) + \ad_{m_i}^l$ and $\vp'(i) = \psi'(i) + \ad_{m_i'}^r$ for $i\in B$ and some elements $m_i, m_i'\in A$ which depend on $i$. Our first proposition yields a criterion for when $L$ is nilpotent which is based on the following recursive construction. Define $A_0 = A$ and $A_{k+1} = \sigma\inv(\sigma(A_k)L + L\sigma(A_k))$ for $k\geq 0$.

\begin{prop}\label{prop 2.1}
Let $B$ be a nilpotent Leibniz algebra of class $s$. Then $L^{k+s} \subseteq \sigma(A_k) \subseteq L^k$ for all $k\geq 0$. Hence $L$ is nilpotent if and only if $A_k = 0$ for some $k$.
\end{prop}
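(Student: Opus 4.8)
The plan is to establish the two containments $\sigma(A_k)\subseteq L^k$ and $L^{k+s}\subseteq\sigma(A_k)$ separately, each by induction on $k$, and then read off the stated equivalence. Two preliminary observations set things up. First, $\sigma(A)=\ker\pi$ is an ideal of $L$, so $\sigma(A_k)L$ and $L\sigma(A_k)$ are contained in $\sigma(A)$; hence each $A_{k+1}$ is well defined and in fact $\sigma(A_{k+1})=\sigma(A_k)L+L\sigma(A_k)$. Second, since $\pi$ is a surjective homomorphism, a routine induction gives $\pi(L^k)=B^k$ for every $k$.

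For the containment $\sigma(A_k)\subseteq L^k$, the case $k=0$ is just $\sigma(A)\subseteq L$. Assuming $\sigma(A_k)\subseteq L^k$, I would compute
$$\sigma(A_{k+1})=\sigma(A_k)L+L\sigma(A_k)\subseteq L^kL+LL^k\subseteq LL^k=L^{k+1},$$
where $L^kL\subseteq LL^k$ is precisely Lemma \ref{left norming}. For the containment $L^{k+s}\subseteq\sigma(A_k)$, the base case $k=0$ is where the nilpotency hypothesis on $B$ enters: $\pi(L^s)=B^s=0$, so $L^s\subseteq\ker\pi=\sigma(A_0)$. Assuming $L^{k+s}\subseteq\sigma(A_k)$, I would compute
$$L^{(k+1)+s}=L\,L^{k+s}\subseteq L\,\sigma(A_k)\subseteq\sigma(A_k)L+L\sigma(A_k)=\sigma(A_{k+1}).$$

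The equivalence then follows formally: if $L^n=0$ then $L^k=0$ for all $k\geq n$, so $\sigma(A_n)\subseteq L^n=0$ and injectivity of $\sigma$ forces $A_n=0$; conversely, if $A_k=0$ for some $k$, then $L^{k+s}\subseteq\sigma(A_k)=0$, so $L$ is nilpotent. I do not anticipate a serious obstacle—the argument is largely bookkeeping—but the one step that genuinely requires care is the appeal to Lemma \ref{left norming}. It is this lemma that allows the left-normed lower central series $L^k$ to absorb the two-sided products appearing in the recursion for $A_{k+1}$; without it the inclusion $\sigma(A_{k+1})\subseteq L^{k+1}$ need not hold. I would also confirm that the shift by $s$ in the exponent is exactly what the base case demands, i.e. that $B^s=0$ suffices.
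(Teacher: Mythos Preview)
Your proof is correct and follows essentially the same approach as the paper's: the same base case via $\pi(L^s)=B^s=0$, the same inductive step, and the same appeal to Lemma~\ref{left norming} to collapse $L^kL+LL^k$ into $L^{k+1}$. The only cosmetic difference is that the paper combines the two inclusions into a single chain $L^{n+1+s}\subseteq L\sigma(A_n)\subseteq\sigma(A_{n+1})\subseteq L^nL+LL^n=L^{n+1}$ rather than treating them as separate inductions.
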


\begin{proof}
Since $\pi:L\xrightarrow{} B$ is a homomorphism, one computes $\pi(L^s) = B^s = 0$, which implies that $L^s\subseteq \ker \pi = \sigma(A) = \sigma(A_0)$. Also, $\sigma(A_0) = \sigma(A) \subseteq L = L^0$. We therefore have a base case $L^s\subseteq \sigma(A_0)\subseteq L^0$ for $k=0$. Now suppose $L^{n+s} \subseteq \sigma(A_n)\subseteq L^n$ for some $n\geq 0$. Then \begin{align*}
    L^{n+1+s} &= LL^{n+s}\\ &\subseteq L\sigma(A_n) & \text{by induction} \\ &\subseteq \sigma(A_n)L + L\sigma(A_n)\\ &\subseteq L^nL + LL^n & \text{by induction} \\ &\overset{\ast}{=} LL^n \\ &= L^{n+1}
\end{align*} where $\sigma(A_n)L + L\sigma(A_n) =\sigma(A_{n+1})$ and the equality $\ast$ follows by Lemma \ref{left norming}. Thus $L^{s+k}\subseteq \sigma(A_k)\subseteq L^k$ for all $k\geq 0$ via induction. For the second statement, we first note that if $L$ is nilpotent, then $\sigma(A_k)\subseteq L^k = 0$ for some $k\geq 0$. This means $A_k=0$ since $\sigma$ is injective. Conversely, if $A_k=0$ for some $k\geq 0$, then $\sigma(A_k)=0$ and thus $L^{k+s}=0$. Hence $L$ is nilpotent.
\end{proof}

Again, let $(\vp,\vp')$ be a lift of $(\Phi,\Phi')$.

\begin{defn}
An ideal $N$ of $A$ is $(\vp,\vp')$\textit{-invariant} if $\vp(i)n, \vp'(i)n\in N$ for all $i\in B$ and $n\in N$.
\end{defn}

\begin{lem}
Let $(\vp,\vp')$ and $(\psi,\psi')$ be lifts of $(\Phi,\Phi')$. Then $N$ is $(\vp,\vp')$-invariant if and only if $N$ is $(\psi,\psi')$-invariant.
\end{lem}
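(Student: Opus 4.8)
The plan is to exploit the explicit relationship between any two lifts of $(\Phi,\Phi')$. Recall that if $(\vp,\vp')$ and $(\psi,\psi')$ both lift $(\Phi,\Phi')$, then for each $i\in B$ there exist elements $m_i, m_i'\in A$ with $\vp(i) = \psi(i) + \ad_{m_i}^l$ and $\vp'(i) = \psi'(i) + \ad_{m_i'}^r$. Since the statement is symmetric in the two lifts, it suffices to prove one direction: assume $N$ is $(\psi,\psi')$-invariant and show it is $(\vp,\vp')$-invariant.

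First I would fix $i\in B$ and $n\in N$ and compute $\vp(i)n = \psi(i)n + \ad_{m_i}^l(n) = \psi(i)n + m_i n$. By hypothesis $\psi(i)n\in N$, so it remains to see that $m_i n\in N$; but $N$ is an ideal of $A$, hence $m_i n\in N$, and therefore $\vp(i)n\in N$. The argument for the right action is identical: $\vp'(i)n = \psi'(i)n + \ad_{m_i'}^r(n) = \psi'(i)n + n m_i'$, and again $\psi'(i)n\in N$ by hypothesis while $n m_i'\in N$ because $N$ is an ideal. Since $i$ and $n$ were arbitrary, $N$ is $(\vp,\vp')$-invariant. The converse follows by symmetry (equivalently, by swapping the roles of the two lifts, which replaces $m_i, m_i'$ by $-m_i, -m_i'$).

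There is no real obstacle here; the only point requiring a moment's care is the bookkeeping around what $\ad^l$ and $\ad^r$ denote, namely $\ad_m^l(n) = mn$ and $\ad_m^r(n) = nm$, together with the observation that both belong to $N$ precisely because $N$ is assumed to be an ideal (not merely a subspace) of $A$. I would also note explicitly that this is why the definition of $(\vp,\vp')$-invariance is phrased for ideals $N$ rather than arbitrary subspaces — the statement would fail without that hypothesis. Consequently the notion of $(\vp,\vp')$-invariance depends only on $(\Phi,\Phi')$, which justifies later speaking of invariant ideals without reference to a chosen lift.
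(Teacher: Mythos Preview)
Your proof is correct and follows essentially the same approach as the paper: use the relation between lifts to write $\vp(i)n$ (resp.\ $\vp'(i)n$) as $\psi(i)n$ plus an adjoint term, then invoke the ideal property of $N$ to conclude. The only cosmetic difference is which direction is written out explicitly.
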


\begin{proof}
Let $i\in B$. Since we have two lifts of the same pair, they are related by $\psi(i) = \vp(i) + \ad_{m_i}^l$ and $\psi'(i) = \vp'(i) + \ad_{m_i'}^r$ for some $m_i,m_i'\in A$. In one direction, assume $N$ is $(\vp,\vp')$-invariant. Then $\vp(i)n,\vp'(i)n\in N$ for all $n\in N$ by definition. Also, $m_in,nm_i'\in N$ for all $n\in N$ since $N$ is an ideal. Thus $\psi(i)n,\psi'(i)n\in N$ and so $N$ is $(\psi,\psi')$-invariant. The other direction is similar.
\end{proof}

\begin{defn}
An ideal $N$ of $A$ is $B$\textit{-invariant} if $N$ is $(\vp,\vp')$-invariant for some, and hence all, lifts of $(\Phi,\Phi')$.
\end{defn}

In particular, $A$ itself is $B$-invariant since $\vp(i),\vp'(i)\in \mathscr{L}(A)$ for all $i\in B$. Consider a $B$-invariant ideal $N$ of $A$ and let $(\vp,\vp')$ be a lift of $(\Phi,\Phi')$. We define $\G(N,\vp,\vp')$ to be the $B$-invariant ideal of $A$ generated by $AN$, $NA$, and $\{\vp(i)n,\vp'(i)n~|~ i\in B, n\in N\}$. Then $\G(N,\vp,\vp')\subseteq N$ and we reach the following lemma.

\begin{lem}
If $(\vp,\vp')$ and $(\psi,\psi')$ are lifts of $(\Phi,\Phi')$, then \[\G(N,\vp,\vp')= \G(N,\psi,\psi').\]
\end{lem}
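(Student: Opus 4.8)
The plan is to show that each of the generating sets of $\G(N,\vp,\vp')$ lies inside $\G(N,\psi,\psi')$, and vice versa; since both are $B$-invariant ideals containing the same ``base'' generators $AN$ and $NA$, it suffices to handle the twisted generators $\vp(i)n$ and $\vp'(i)n$. First I would fix $i \in B$ and recall that, as lifts of the same pair $(\Phi,\Phi')$, we have $\vp(i) = \psi(i) + \ad_{m_i}^l$ and $\vp'(i) = \psi'(i) + \ad_{m_i'}^r$ for some $m_i, m_i' \in A$ depending on $i$. Hence for $n \in N$,
\[
    \vp(i)n = \psi(i)n + m_i n, \qquad \vp'(i)n = \psi'(i)n + n\, m_i'.
\]
The term $\psi(i)n$ is a generator of $\G(N,\psi,\psi')$, while $m_i n \in AN$ and $n\, m_i' \in NA$ are among its base generators. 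Therefore $\vp(i)n, \vp'(i)n \in \G(N,\psi,\psi')$ for all $i \in B$, $n \in N$. Since $\G(N,\psi,\psi')$ also contains $AN$ and $NA$ and is a $B$-invariant ideal of $A$, it contains the $B$-invariant ideal generated by all these elements, namely $\G(N,\vp,\vp')$. By symmetry (swapping the roles of the two lifts, which merely replaces $m_i, m_i'$ by $-m_i, -m_i'$), we get the reverse inclusion $\G(N,\psi,\psi') \subseteq \G(N,\vp,\vp')$, and the two ideals coincide.

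The only genuine subtlety is that the definition of $\G(N,\vp,\vp')$ refers to \emph{the} $B$-invariant ideal generated by a set, so one should check this is well defined: an arbitrary intersection of $B$-invariant ideals of $A$ is again a $B$-invariant ideal (both ``ideal'' and, by the previous lemma, ``$(\vp,\vp')$-invariant = $B$-invariant'' are closed under intersection), so the smallest one containing a given set exists. Granting that, the containment argument above is the whole proof; I expect no real obstacle, since everything reduces to the one-line rewriting of $\vp(i)n$ and $\vp'(i)n$ in terms of $\psi$-data plus elements of $AN$ and $NA$. The mild bookkeeping point to be careful about is simply that $AN$ and $NA$ appear \emph{by name} in both generating sets, so the correction terms $m_i n$ and $n m_i'$ land exactly where needed without any further work.
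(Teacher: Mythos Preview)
Your proposal is correct and follows essentially the same approach as the paper: both arguments observe that $AN$ and $NA$ are common generators, rewrite the twisted generators of one lift in terms of the other via $\psi(i)n = \vp(i)n + m_in$ and $\psi'(i)n = \vp'(i)n + nm_i'$, and conclude one inclusion, with the reverse following by symmetry. Your extra remark on the well-definedness of the $B$-invariant ideal generated by a set is a harmless elaboration not present in the paper.
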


\begin{proof}
It again suffices to show one direction. First note that $AN$ and $NA$ are contained in both sides of the equality by definition. For $i\in B$ and $n\in N$, we know $\psi(i)n = \vp(i)n + m_in$ and $\psi'(i)n = \vp'(i)n + nm_i'$ for some $m_i,m_i'\in A$. These expressions clearly fall in $\G(N,\vp,\vp')$ and therefore $\G(N,\psi,\psi')\subseteq \G(N,\vp,\vp')$.
\end{proof}

We now fix a lift $(\vp,\vp')$ of $(\Phi,\Phi')$ and denote $\G N = \G(N,\vp,\vp')$. Given $B$, $A$, \begin{align*}
    \Phi&:B\xrightarrow{} \Der(A)/\ad^l(A),\\ \Phi'&:B\xrightarrow{} \mathscr{L}(A)/\ad^r(A),
\end{align*} and a $B$-invariant ideal $N$ of $A$, define a descending sequence of $B$-invariant ideals $\G_k^B N$ of $N$ by $\G_0^B N = N$ and $\G_{k+1}^BN = \G(\G_k^B N)$ for $k\geq 0$.

\begin{thm}\label{thm 3.1}
Consider the extension $0\xrightarrow{} A\xrightarrow{\sigma} L\xrightarrow{} B\xrightarrow{} 0$ and our pair of maps $(\Phi,\Phi')$. If $A_0=A$ and $A_{k+1} = \sigma\inv(\sigma(A_k)L + L\sigma(A_k))$, then $A_k = \G_k^BA$ for all $k\geq 0$.
\end{thm}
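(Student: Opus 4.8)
The plan is to prove the identity $A_k = \G_k^B A$ by induction on $k$, showing at each stage that the recursively defined preimage $A_{k+1} = \sigma^{-1}(\sigma(A_k)L + L\sigma(A_k))$ coincides with $\G(A_k)$, the $B$-invariant ideal of $A$ generated by $A A_k$, $A_k A$, and $\{\vp(i)n, \vp'(i)n \mid i \in B, n \in A_k\}$. The base case $k=0$ is immediate since $A_0 = A = \G_0^B A$ by definition. For the inductive step, assuming $A_n = \G_n^B A$, it suffices to establish $\sigma^{-1}(\sigma(A_n)L + L\sigma(A_n)) = \G(A_n)$; since $\G_{n+1}^B A = \G(\G_n^B A) = \G(A_n)$ by the inductive hypothesis, this closes the induction.

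To prove $\sigma^{-1}(\sigma(A_n)L + L\sigma(A_n)) = \G(A_n)$, the natural move is to use the section $T : B \to L$ to decompose $L$. Every element of $L$ can be written as $\sigma(m) + T(i)$ for $m \in A$ and $i \in B$ (more precisely, as a sum of such, using that $\sigma(A) = \ker\pi$ and $T$ is a section). Then I would expand products like $\sigma(a) \cdot (\sigma(m) + T(i))$ and $(\sigma(m) + T(i)) \cdot \sigma(a)$ for $a \in A_n$, $m \in A$, $i \in B$. The terms $\sigma(a)\sigma(m)$ and $\sigma(m)\sigma(a)$ land in $\sigma(A_n A)$ and $\sigma(A A_n)$ respectively, since $\sigma$ is an algebra homomorphism and $A_n$ is an ideal. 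The cross terms $\sigma(a)T(i)$ and $T(i)\sigma(a)$ are exactly $\sigma(\vp'(i)a)$ and $\sigma(\vp(i)a)$ by the definitions of $\vp$ and $\vp'$. Thus $\sigma(A_n)L + L\sigma(A_n) \subseteq \sigma\bigl(A A_n + A_n A + \{\vp(i)a, \vp'(i)a\}\bigr)$, and applying $\sigma^{-1}$ gives containment in the ideal generated by these sets, i.e. in $\G(A_n)$. For the reverse inclusion, each generator of $\G(A_n)$ — namely elements of $A A_n$, $A_n A$, and $\vp(i)a$, $\vp'(i)a$ — is visibly a $\sigma^{-1}$-image of something in $\sigma(A_n)L + L\sigma(A_n)$, so the ideal they generate in $A$ is contained in $\sigma^{-1}(\sigma(A_n)L + L\sigma(A_n))$ provided that the latter set is itself a $B$-invariant ideal of $A$.

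That last provision is where I expect the main obstacle to lie: I must verify that $A_{n+1} = \sigma^{-1}(\sigma(A_n)L + L\sigma(A_n))$ is genuinely an ideal of $A$ and is $B$-invariant, so that it legitimately contains the $B$-invariant ideal generated by the generators, giving equality rather than mere mutual containment of generating sets. Being an ideal of $A$ follows because $\sigma(A_n)L + L\sigma(A_n)$ is an ideal of $L$ (as $\sigma(A_n)$ is an ideal of $L$, $A_n$ being $B$-invariant) contained in $\sigma(A)$, and $\sigma$ is an injective homomorphism onto the ideal $\sigma(A)$; pulling back along $\sigma$ preserves the ideal property within $A$. For $B$-invariance, I would check that $\vp(i)$ and $\vp'(i)$ map $A_{n+1}$ into itself: for $a' \in A_{n+1}$ we have $\sigma(a') \in \sigma(A_n)L + L\sigma(A_n)$, and $\sigma(\vp(i)a') = T(i)\sigma(a')$, which lies in $L(\sigma(A_n)L + L\sigma(A_n)) \subseteq \sigma(A_n)L + L\sigma(A_n)$ using that this set is a left ideal of $L$; similarly for $\vp'$ on the right. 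Once these structural facts are in hand, the two containments from the previous paragraph upgrade to the desired equality, completing the induction.
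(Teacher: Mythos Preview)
Your proposal is correct and follows the same inductive skeleton as the paper's proof, but your execution is more elementary. The paper invokes the factor-system machinery of \cite{mainellis} to construct an equivalent extension $0\to A\xrightarrow{\iota} L_2\to B\to 0$ with $L_2=A\oplus B$ and an explicit multiplication, then transfers the computation of $\sigma^{-1}(\sigma(m)x)$ and $\sigma^{-1}(x\sigma(m))$ to $L_2$ via the equivalence $\tau$. You bypass this by using the section $T$ directly to write $x=\sigma(m_x)+T(i_x)$ and expanding; the resulting formulas $\sigma^{-1}(\sigma(a)x)=am_x+\vp'(i_x)a$ and $\sigma^{-1}(x\sigma(a))=m_xa+\vp(i_x)a$ are identical to the paper's, so both arguments land in the same place. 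Your route avoids the external reference and makes the proof self-contained. You are also more careful than the paper on one point: for the reverse inclusion $\G_{n+1}^B A\subseteq A_{n+1}$, showing that the \emph{generators} of $\G_{n+1}^B A$ lie in $A_{n+1}$ only yields the ideal inclusion once one knows $A_{n+1}$ is itself a $B$-invariant ideal of $A$, and you explicitly verify this (using that $\sigma(A_n)$ is an ideal of $L$ by the inductive hypothesis that $A_n$ is $B$-invariant). The paper leaves this implicit.
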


\begin{proof}
By the work in \cite{mainellis}, there exists a unique factor system $(\vp,\vp',f)$ belonging to the extension $0\xrightarrow{} A\xrightarrow{\sigma} L\xrightarrow{} B\xrightarrow{} 0$ and $T$. By construction, $\vp$ and $\vp'$ are the maps of our lift $(\vp,\vp')$. Next, there exists another extension $0\xrightarrow{} A\xrightarrow{\iota} L_2\xrightarrow{} B\xrightarrow{} 0$ of $A$ by $B$ to which $(\vp,\vp',f)$ belongs. Here, $L_2$ is the vector space $A\oplus B$ equipped with multiplication $(m,i)(n,j) = (mn + \vp(i)n + \vp'(j)m + f(i,j),ij)$, where $f:B\times B\xrightarrow{} A$ is a bilinear form. Also $\iota(m) = (m,0)$. Since $(\vp,\vp',f)$ is equivalent to itself, the extensions are equivalent, and thus there exists an isomorphism $\tau:L\xrightarrow{} L_2$ such that $\tau\sigma = \iota$.

We will now prove the statement via induction, first noting that the base case $A_0 = A = \G_0^BA$ holds trivially. Assume that $A_n = \G_n^BA$ for some $n\geq 0$. By definition, it suffices to show the inclusion of generating elements for each side of the equality. Generating elements of $A_{n+1}$ have the forms $\sigma\inv(\sigma(m)x)$ and $\sigma\inv(x\sigma(m))$ for $x\in L$ and $m\in A_k$. Denote $\tau(x) = (m_x,i_x)\in L_2$. We compute \begin{align*}
    \sigma\inv(\sigma(m)x) &= \sigma\inv\tau\inv(\tau\sigma(m)\tau(x)) \\ &= \iota\inv((m,0)(m_x,i_x)) \\ &= \iota\inv(mm_x + \vp'(i_x)m,0) \\ &= mm_x + \vp'(i_x)m
\end{align*} and \begin{align*}
    \sigma\inv(x\sigma(m)) &= \sigma\inv\tau\inv(\tau(x)\tau\sigma(m)) \\ &= \iota\inv((m_x,i_x)(m,0)) \\ &= \iota\inv(m_xm + \vp(i_x)m,0) \\ &= m_xm + \vp(i_x)m.
\end{align*} Since $A_n = \G_n^BA$, one has $m_xm\in A(\G_n^BA)$ and $mm_x\in (\G_n^BA)A$, which are both included in $\G_{n+1}^BA$ since $\G_{n+1}^BA$ is the $B$-invariant ideal generated by $(\G_n^BA)A$, $A(\G_n^BA)$, and $\{\vp(i)m,\vp'(i)m~|~ m\in \G_n^BA,i\in B\}$. Thus $\vp'(i_x)m,\vp(i_x)m\in \G_{n+1}^BA$ as well and so $A_{n+1}\subseteq \G_{n+1}^BA$. Conversely, one computes \begin{align*}
    (\G_n^BA)A = \sigma\inv(\sigma(\G_n^BA)\sigma(A)) \subseteq \sigma\inv(\sigma(A_n)L) \subseteq A_{n+1},\\ A(\G_n^BA) = \sigma\inv(\sigma(A)\sigma(\G_n^BA))\subseteq \sigma\inv(L\sigma(A_n)) \subseteq A_{n+1}.
\end{align*} Also, let $i\in B$ and $m\in \G_n^BA = A_n$. Then $\vp(i)m = \sigma\inv(T(i)\sigma(m))\in A_{n+1}$ and $\vp'(i)m = \sigma\inv(\sigma(m)T(i))\in A_{n+1}$ since $T(i)\in L$. Therefore $\G_{n+1}^BA\subseteq A_{n+1}$.
\end{proof}

Given $B$, $A$, $\Phi:B\xrightarrow{} \Der(A)/\ad^l(A)$, and $\Phi':B\xrightarrow{} \mathscr{L}(A)/\ad^r(A)$, we define a new notion of nilpotency for $A$.

\begin{defn}
$A$ is $B$\textit{-nilpotent of class $u$}, written $\nil_B A = u$, if $\G_u^BA = 0$ and $\G_{u-1}^BA \neq 0$ for some $u\geq 0$.
\end{defn}
The following two corollaries hold similarly to the Lie case. For their proofs, simply replace Proposition 2.1 and Theorem 3.1 of \cite{yankosky} by the analogous Proposition \ref{prop 2.1} and Theorem \ref{thm 3.1} of the present paper. The subsequent theorem is the main result, which follows from these corollaries and the same logic as Yankosky's proof.

\begin{cor}
$L$ is nilpotent if and only if $B$ is nilpotent and $\G_u^B A = 0$ for some $u\geq 1$.
\end{cor}

\begin{cor}
$\max(\nil_BA,\nil B) \leq \nil L\leq \nil_B A + \nil B$.
\end{cor}

\begin{thm}
Let $(\vp,\vp')$ and $(\psi,\psi')$ be lifts of $(\Phi,\Phi')$ corresponding to extensions $0\xrightarrow{} A\xrightarrow{} L_{(\vp,\vp')}\xrightarrow{} B\xrightarrow{} 0$ and $0\xrightarrow{} A\xrightarrow{} L_{(\psi,\psi')}\xrightarrow{} B\xrightarrow{} 0$ respectively. Then $L_{(\vp,\vp')}$ is nilpotent if and only if $L_{(\psi,\psi')}$ is nilpotent.
\end{thm}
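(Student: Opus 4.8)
The plan is to reduce the whole statement to the lift-independence of the $\G$-sequence that was established above, together with Theorem \ref{thm 3.1} and Proposition \ref{prop 2.1}. First I would record the purely formal observation that the descending sequence $\G_k^B A$, defined by $\G_0^B A = A$ and $\G_{k+1}^B A = \G(\G_k^B A)$, depends only on the data $A$, $B$, $\Phi$, $\Phi'$ and not on the auxiliary choice of lift used to compute it. Indeed, by the lemma $\G(N,\vp,\vp') = \G(N,\psi,\psi')$ applied at each stage, an easy induction on $k$ shows that the sequences built from $(\vp,\vp')$ and from $(\psi,\psi')$ coincide as $B$-invariant ideals of $A$. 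In particular $\nil_B A$ is a well-defined invariant of $(A,B,\Phi,\Phi')$.

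Next I would bring in the two extensions. Write $\sigma_1\colon A\to L_{(\vp,\vp')}$ and $\sigma_2\colon A\to L_{(\psi,\psi')}$ for the respective inclusions, and let $A_k^{(1)}$ and $A_k^{(2)}$ be the associated recursively defined ideals, $A_0 = A$ and $A_{k+1} = \sigma\inv(\sigma(A_k)L + L\sigma(A_k))$, formed inside $L_{(\vp,\vp')}$ and $L_{(\psi,\psi')}$ respectively. Since, by hypothesis, $(\vp,\vp')$ is the lift of $(\Phi,\Phi')$ corresponding to $L_{(\vp,\vp')}$ (via its factor system, in the sense of \cite{mainellis} used in the proof of Theorem \ref{thm 3.1}), Theorem \ref{thm 3.1} applies and gives $A_k^{(1)} = \G_k^B A$ for all $k$. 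Applying the same theorem to $L_{(\psi,\psi')}$ with the lift $(\psi,\psi')$ gives $A_k^{(2)} = \G_k^B A$ for all $k$. Combining these with the first paragraph yields $A_k^{(1)} = A_k^{(2)}$ for every $k$.

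Finally I would invoke Proposition \ref{prop 2.1}. Since $B$ is nilpotent, say of class $s$, that proposition gives $L^{k+s}\subseteq \sigma(A_k)\subseteq L^k$ in each extension, hence $L_{(\vp,\vp')}$ is nilpotent if and only if $A_k^{(1)} = 0$ for some $k$, and likewise $L_{(\psi,\psi')}$ is nilpotent if and only if $A_k^{(2)} = 0$ for some $k$. As $A_k^{(1)} = A_k^{(2)} = \G_k^B A$, these two conditions are literally the same, so one extension is nilpotent precisely when the other is. (One could equally cite the first corollary, which phrases this as "$B$ nilpotent and $\G_u^B A = 0$ for some $u$", a condition manifestly independent of the lift.)

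I do not expect a genuine obstacle here: all the analytic content was already absorbed into Theorem \ref{thm 3.1} and the lemmas identifying $\G(N,\vp,\vp')$ across lifts. The only point that needs a little care is bookkeeping — making sure that when Proposition \ref{prop 2.1} is applied to $L_{(\vp,\vp')}$, the recursion is built from the very lift $(\vp,\vp')$ that this extension induces, so that Theorem \ref{thm 3.1} can legitimately be invoked to rewrite $A_k$ as $\G_k^B A$; this is exactly how the correspondence between lifts and extensions is set up. The same argument, word for word but with Lemma \ref{left norming} replaced by Lemma \ref{nilpotent equality} and the obvious diassociative bookkeeping, will handle the diassociative analogue.
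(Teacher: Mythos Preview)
Your proposal is correct and matches the paper's approach: the paper simply notes that the theorem follows from the preceding corollaries ``and the same logic as Yankosky's proof,'' which is exactly the reduction you spell out---lift-independence of $\G_k^B A$, the identification $A_k=\G_k^B A$ from Theorem~\ref{thm 3.1}, and the nilpotency criterion from Proposition~\ref{prop 2.1} (or equivalently the first corollary). Your write-up is in fact more detailed than what the paper provides.
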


\section{Diassociative Case}
Consider a pair of nilpotent diassociative algebras $A$ and $B$ and an extension $0\xrightarrow{} A\xrightarrow{\sigma}L \xrightarrow{\pi} B\xrightarrow{} 0$ of $A$ by $B$ with section $T:B\xrightarrow{} L$. Throughout, we often let $*$ range over $\dashv$ and $\vdash$ for the sake of brevity. We consider four natural ways for $B$ to act on $A$. Define $\vp\dd, \vp\vv, \vp\dd',\vp\vv': B\xrightarrow{} \mathscr{L}(A)$ by $\vp_*(i)m = \sigma\inv(T(i)*\sigma(m))$ and $\vp_*'(i)m = \sigma\inv(\sigma(m)* T(i))$ for $i\in B$, $m\in A$. Let \begin{align*}
    q_*&:\mathscr{L}(A)\xrightarrow{} \mathscr{L}(A)/\ad_*^l(A),\\ q_*'&:\mathscr{L}(A)\xrightarrow{} \mathscr{L}(A)/\ad_*^r(A)
\end{align*} be the natural projections and define a tuple of maps $\Phi = (\Phi\dd,\Phi\vv,\Phi\dd',\Phi\vv')$ by $\Phi_* = q_*\vp_*$ and $\Phi_*' = q_*'\vp_*'$. The tuple $\vp = (\vp\dd,\vp\vv,\vp\dd',\vp\vv')$ is called a \textit{lift} of $\Phi$. Two lifts $\vp = (\vp\dd,\vp\vv,\vp\dd',\vp\vv')$ and $\psi = (\psi\dd,\psi\vv,\psi\dd',\psi\vv')$ of $\Phi$ are related by \begin{align*}
    \psi_*(i) &= \vp_*(i) + \ad_*^l(m_{*,i}), \\ \psi_*'(i) &= \vp_*'(i) + \ad_*^r(m_{*,i}')
\end{align*} for $i\in B$ and some $m_{*,i}, m_{*,i}'\in A$ which depend on $i$. Finally, let $A_0 = A$ and define $A_{k+1} = \sigma\inv(\sigma(A_k)\lozenge L + L\lozenge \sigma(A_k))$ for $k\geq 0$.

\begin{prop}
Let $B$ be a nilpotent diassociative algebra of class $s$. Then $L^{k+s} \subseteq \sigma(A_k)\subseteq L^k$ for all $k\geq 0$. Hence $L$ is nilpotent if and only if $A_k=0$ for some $k$.
\end{prop}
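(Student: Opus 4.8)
The plan is to mirror the proof of Proposition \ref{prop 2.1} exactly, substituting the $\lozenge$-based machinery for the ordinary product and invoking Lemma \ref{nilpotent equality} in place of Lemma \ref{left norming}. First I would establish the base case: since $\pi$ is a diassociative homomorphism, it preserves both products, hence $\pi(D^{\{s\}}) = \pi(D)^{\{s\}}$ for $D = L$, so $\pi(L^{\{s\}}) = B^{\{s\}} = 0$ by nilpotency of $B$ of class $s$ — but here I must be careful which nilpotency notion is in play. Using Lemma \ref{nilpotent equality}, all three coincide, so I may freely write $L^s = L^{\{s\}}$ and conclude $L^s \subseteq \ker\pi = \sigma(A) = \sigma(A_0)$, while $\sigma(A_0) = \sigma(A) \subseteq L = L^0$ is trivial. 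This gives $L^{0+s} \subseteq \sigma(A_0) \subseteq L^0$.

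For the inductive step, assuming $L^{n+s} \subseteq \sigma(A_n) \subseteq L^n$, I would compute, using $L^{k+1} = L^{\{k+1\}} = L \lozenge L^{\{k\}} = L \lozenge L^k$ (again via Lemma \ref{nilpotent equality}), the chain
\begin{align*}
L^{n+1+s} &= L \lozenge L^{n+s} \\
&\subseteq L \lozenge \sigma(A_n) \\
&\subseteq \sigma(A_n)\lozenge L + L \lozenge \sigma(A_n) = \sigma(A_{n+1}) \\
&\subseteq L^n \lozenge L + L \lozenge L^n.
\end{align*}
Here the final line uses the inductive hypothesis $\sigma(A_n) \subseteq L^n$ on both summands. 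The remaining task is to show $L^n \lozenge L + L \lozenge L^n \subseteq L^{n+1}$, which is the diassociative analogue of the step marked $\ast$ in Proposition \ref{prop 2.1}. Since $L^{n+1} = L^{n}$ in Lemma \ref{nilpotent equality}'s sense equals $L^{<n+1>} = L^{<n>} \lozenge L \supseteq L^n \lozenge L$, the right-hand summand is handled by the \textit{right} nilpotency description, while the left summand $L \lozenge L^n = L^{\{n+1\}} = L^{n+1}$ is handled by the \textit{left} description; the point of Lemma \ref{nilpotent equality} is precisely that these all agree, so both summands land in $L^{n+1}$. Thus $L^{n+1+s} \subseteq \sigma(A_{n+1}) \subseteq L^{n+1}$, closing the induction.

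The second statement follows formally as before. If $L$ is nilpotent, say $L^u = 0$, then $\sigma(A_u) \subseteq L^u = 0$, and injectivity of $\sigma$ forces $A_u = 0$. Conversely, if $A_k = 0$ for some $k$, then $\sigma(A_k) = 0$, so $L^{k+s} = 0$ by the sandwich inequality, hence $L$ is nilpotent.

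The main obstacle — and really the only nontrivial point — is the correct bookkeeping around the three nilpotency filtrations and making sure that each appearance of $L^k$ in the argument is justified by Lemma \ref{nilpotent equality}. In the Leibniz case a single identity ($L^nL \subseteq LL^n$) sufficed because there is one product; in the diassociative case one needs both the inclusion $L^n \lozenge L \subseteq L^{n+1}$ and $L \lozenge L^n \subseteq L^{n+1}$, and neither is a definitional triviality — they are exactly the content of the equality of $D^{<n>}$, $D^{\{n\}}$, and $D^n$. Once that equivalence is invoked, the proof is a line-by-line transcription of Proposition \ref{prop 2.1}.
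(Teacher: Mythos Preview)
Your proposal is correct and follows essentially the same route as the paper: both use Lemma \ref{nilpotent equality} to pass freely between $L^k$, $L^{\{k\}}$, and $L^{<k>}$ so that the chain $L^{n+1+s}\subseteq \sigma(A_{n+1})\subseteq L^{n+1}$ goes through. The only cosmetic difference is that the paper opens the inductive chain with the right filtration, writing $L^{n+1+s}=L^{<n+1+s>}=L^{n+s}\lozenge L$, whereas you use the left filtration $L^{n+1+s}=L\lozenge L^{n+s}$; both are equally valid by Lemma \ref{nilpotent equality}.
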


\begin{proof}
As with the Leibniz case, the base case $k=0$ follows by our definitions and the properties of extensions. Suppose $L^{n+s}\subseteq \sigma(A_n)\subseteq L^n$ for some $n\geq 0$. We recall that $L^n = L^{<n>} = L^{\{n\}}$ by Lemma \ref{nilpotent equality} and thereby compute \begin{align*}
    L^{n+1+s} &= L^{<n+1+s>}\\ &= L^{n+s}\lozenge L\\ &\subseteq \sigma(A_n)\lozenge L &\text{by induction} \\ &\subseteq \sigma(A_n)\lozenge L + L\lozenge\sigma(A_n) \\ &\subseteq L^{<n>}\lozenge L + L\lozenge L^{\{n\}} &\text{by induction} \\ &= L^{n+1}
\end{align*} where $\sigma(A_n)\lozenge L + L\lozenge\sigma(A_n) = \sigma(A_{n+1})$. Thus $L^{s+k} \subseteq \sigma(A_k)\subseteq L^k$ for $k\geq 0$ via induction. The second statement follows by the same logic as the Leibniz case.
\end{proof}

Once more, let $\vp = (\vp\dd,\vp\vv,\vp\dd',\vp\vv')$ be a lift of $\Phi$.

\begin{defn}
An ideal $N$ of $A$ is \textit{$\vp$-invariant} if $\vp_*(i)n,\vp_*'(i)n\in N$ for all $i\in B$, $n\in N$.
\end{defn}

\begin{lem}
Let $\vp$ and $\psi$ be lifts of $\Phi$. Then $N$ is $\vp$-invariant if and only if $N$ is $\psi$-invariant.
\end{lem}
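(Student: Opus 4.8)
The plan is to mirror the proof of the corresponding Leibniz lemma, using that two lifts of the same $\Phi$ differ only by inner maps and that $N$, being an ideal of $A$, absorbs those inner corrections. By the evident symmetry between $\vp$ and $\psi$, it suffices to prove a single implication, say that $\vp$-invariance of $N$ implies $\psi$-invariance.

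First I would record the relation between the two lifts. Since $\vp = (\vp\dd,\vp\vv,\vp\dd',\vp\vv')$ and $\psi = (\psi\dd,\psi\vv,\psi\dd',\psi\vv')$ both project to $\Phi$ under the maps $q_*,q_*'$, there exist elements $m_{*,i},m_{*,i}'\in A$, depending on $i\in B$ and on the symbol $*\in\{\dashv,\vdash\}$, with $\psi_*(i) = \vp_*(i) + \ad_*^l(m_{*,i})$ and $\psi_*'(i) = \vp_*'(i) + \ad_*^r(m_{*,i}')$, exactly as stated in the preamble to this section.

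Next, fix $i\in B$ and $n\in N$, and assume $N$ is $\vp$-invariant. Then $\vp_*(i)n\in N$ and $\vp_*'(i)n\in N$ for each choice of $*$. Because $N$ is an ideal of $A$, the correction terms $\ad_*^l(m_{*,i})n = m_{*,i}*n$ and $\ad_*^r(m_{*,i}')n = n*m_{*,i}'$ also lie in $N$. Adding, $\psi_*(i)n = \vp_*(i)n + m_{*,i}*n\in N$ and $\psi_*'(i)n = \vp_*'(i)n + n*m_{*,i}'\in N$. Since this holds for all $i\in B$, all $n\in N$, and both values of $*$, the ideal $N$ is $\psi$-invariant. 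The reverse implication follows by interchanging the roles of $\vp$ and $\psi$, i.e.\ by replacing each $m_{*,i}$ and $m_{*,i}'$ by its negative.

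There is no genuine obstacle here; the only thing requiring care is bookkeeping — tracking all four families of maps $\vp\dd,\vp\vv,\vp\dd',\vp\vv'$ simultaneously through the shorthand in which $*$ ranges over $\dashv$ and $\vdash$, and making sure the correct ad-operator ($\ad_*^l$ versus $\ad_*^r$) is paired with multiplication on the correct side when invoking that $N$ is an ideal.
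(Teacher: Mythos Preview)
Your proof is correct and follows essentially the same approach as the paper: use the relation $\psi_*(i)=\vp_*(i)+\ad_*^l(m_{*,i})$, $\psi_*'(i)=\vp_*'(i)+\ad_*^r(m_{*,i}')$, then combine $\vp$-invariance of $N$ with the fact that $N$ is an ideal to conclude $\psi$-invariance, the converse being symmetric. If anything, your write-up is slightly more explicit than the paper's in spelling out the ideal-absorption step.
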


\begin{proof}
Let $i\in B$. Since $\vp=(\vp\dd,\vp\vv,\vp\dd',\vp\vv')$ and $\psi=(\psi\dd,\psi\vv,\psi\dd',\psi\vv')$ are lifts of the same tuple, they are related by $\psi_*(i) = \vp_*(i) + \ad_*^l(m_{*,i})$ and $\psi_*'(i) = \vp_*'(i) + \ad_*^r(m_{*,i}')$ for some $m_{*,i},m_{*,i}'\in A$. In one direction, suppose $N$ is $\vp$-invariant. Then $\psi_*(i)n, \psi_*'(i)n\in N$ for all $n\in N$ since $N$ is a $\vp$-invariant ideal in $A$. Therefore $N$ is $\psi$-invariant. The converse is similar.
\end{proof}

\begin{defn}
An ideal $N$ of $A$ is \textit{$B$-invariant} if $N$ is $\vp$-invariant for some, and hence all, lifts of $\Phi$.
\end{defn}

In particular, $A$ is $B$-invariant since $\vp_*(i),\vp_*'(i)\in \mathscr{L}(A)$ for all $i\in B$. Now let $N$ be a $B$-invariant ideal in $A$ and $\vp$ be a lift of $\Phi$. We denote by $\G(N,\vp)$ the $B$-invariant ideal generated by $N\dashv A$, $N\vdash A$, $A\dashv N$, $A\vdash N$, and the set $\{\vp_*(i)n, \vp_*'(i)n~|~ i\in B,n\in N\}$. We thus have $\G(N,\vp)\subseteq N$ as well as the following lemma.

\begin{lem}
If $\vp$ and $\psi$ are lifts of $\Phi$, then $\G(N,\vp) = \G(N,\psi)$.
\end{lem}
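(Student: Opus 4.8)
The plan is to mimic the proof of the analogous Leibniz lemma almost verbatim, since the diassociative $\G(N,\vp)$ is again defined as a $B$-invariant ideal generated by an explicit list of elements, and the two lifts differ by inner maps that land inside any ideal. Concretely, I would argue it suffices to prove one inclusion, say $\G(N,\psi)\subseteq \G(N,\vp)$, by symmetry of the roles of $\vp$ and $\psi$. So I fix a lift $\vp$ and a second lift $\psi$, and recall the relation $\psi_*(i) = \vp_*(i) + \ad_*^l(m_{*,i})$ and $\psi_*'(i) = \vp_*'(i) + \ad_*^r(m_{*,i}')$ for $i\in B$ and suitable $m_{*,i}, m_{*,i}'\in A$.

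Next I would check that every generator of $\G(N,\psi)$ lies in $\G(N,\vp)$. The "ideal-multiplication" generators $N\dashv A$, $N\vdash A$, $A\dashv N$, $A\vdash N$ are literally the same for both sides, so they are immediately in $\G(N,\vp)$. For the remaining generators of the form $\psi_*(i)n$ with $n\in N$, I expand using the relation: $\psi_*(i)n = \vp_*(i)n + \ad_*^l(m_{*,i})n = \vp_*(i)n + m_{*,i}*n$. The first summand is a generator of $\G(N,\vp)$ by definition, and the second, $m_{*,i}\dashv n$ or $m_{*,i}\vdash n$, lies in $A\dashv N$ or $A\vdash N$, hence in $\G(N,\vp)$. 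Symmetrically, $\psi_*'(i)n = \vp_*'(i)n + n*m_{*,i}'$, and the correction term lies in $N\dashv A$ or $N\vdash A$. Since $\G(N,\vp)$ is a $B$-invariant ideal containing all generators of $\G(N,\psi)$, and $\G(N,\psi)$ is the smallest such, we get $\G(N,\psi)\subseteq \G(N,\vp)$; swapping the roles of $\vp$ and $\psi$ gives the reverse inclusion and hence equality.

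There is really no serious obstacle here — the content is entirely bookkeeping over the four symbols $\vp\dd,\vp\vv,\vp\dd',\vp\vv'$. The one point that deserves a moment's care is making sure the list of multiplicative generators of $\G(N,\vp)$ genuinely absorbs all the correction terms $m_{*,i}*n$ and $n*m_{*,i}'$ in the correct order ($N$ on the left versus $N$ on the right, and the two products $\dashv,\vdash$), which is why the generating set was defined with all four products $N\dashv A$, $N\vdash A$, $A\dashv N$, $A\vdash N$ rather than fewer. Beyond that, the proof is a direct transcription of the Leibniz argument, so I would keep it short, essentially: "It suffices to show one direction; the multiplicative generators agree; for $i\in B$ and $n\in N$ write $\psi_*(i)n = \vp_*(i)n + m_{*,i}*n$ and $\psi_*'(i)n = \vp_*'(i)n + n*m_{*,i}'$, and observe each term lies in $\G(N,\vp)$."

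\begin{proof}
It suffices to show $\G(N,\psi)\subseteq \G(N,\vp)$, as the reverse inclusion follows by swapping the roles of $\vp$ and $\psi$. The generators $N\dashv A$, $N\vdash A$, $A\dashv N$, and $A\vdash N$ are contained in both sides by definition. Let $i\in B$ and $n\in N$. Since $\vp$ and $\psi$ are lifts of the same tuple $\Phi$, we have $\psi_*(i) = \vp_*(i) + \ad_*^l(m_{*,i})$ and $\psi_*'(i) = \vp_*'(i) + \ad_*^r(m_{*,i}')$ for some $m_{*,i},m_{*,i}'\in A$, whence
\[
\psi_*(i)n = \vp_*(i)n + m_{*,i}*n, \qquad \psi_*'(i)n = \vp_*'(i)n + n*m_{*,i}'.
\]
Here $\vp_*(i)n$ and $\vp_*'(i)n$ are generators of $\G(N,\vp)$, while $m_{*,i}*n\in A\dashv N + A\vdash N$ and $n*m_{*,i}'\in N\dashv A + N\vdash A$; all of these lie in $\G(N,\vp)$. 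Thus $\G(N,\vp)$ is a $B$-invariant ideal of $A$ containing every generator of $\G(N,\psi)$, and since $\G(N,\psi)$ is the smallest such ideal, $\G(N,\psi)\subseteq \G(N,\vp)$.
\end{proof}
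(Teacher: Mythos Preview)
Your proof is correct and follows exactly the same approach as the paper's: show one inclusion, note the four multiplicative generating sets agree, and use the relation $\psi_*(i)=\vp_*(i)+\ad_*^l(m_{*,i})$, $\psi_*'(i)=\vp_*'(i)+\ad_*^r(m_{*,i}')$ to see that the remaining generators land in $\G(N,\vp)$. In fact you spell out the correction terms $m_{*,i}*n$ and $n*m_{*,i}'$ more explicitly than the paper, which simply says these expressions are ``clearly contained in $\G(N,\vp)$'' by analogy with the Leibniz case.
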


\begin{proof}
It suffices to show that $\G(N,\psi)\subseteq \G(N,\vp)$. We first note that $N\dashv A$, $N\vdash A$, $A\dashv N$, and $A\vdash N$ are contained in both sides by definition. Similarly to the Leibniz case, the expressions for $\psi_*(i)n$ and $\psi_*'(i)n$ are clearly contained in $\G(N,\vp)$ for all $i\in B$ and $n\in N$. The converse holds without loss of generality.
\end{proof}

Fix a lift $\vp$ of $\Phi$ and denote $\G N = \G(N,\vp)$. Given $B$, $A$, $\Phi$, and a $B$-invariant ideal $N$ of $A$, define a descending sequence of $B$-invariant ideals $\G_k^BN$ of $N$ by $\G_0^BN := N$ and $\G_{k+1}^BN:= \G(\G_k^BN)$ for $k\geq 0$.

\begin{thm}
Consider $0\xrightarrow{} A\xrightarrow{\sigma} L\xrightarrow{} B\xrightarrow{} 0$ and let $\Phi$ be defined as above. If $A_0=A$ and $A_{k+1} = \sigma\inv(\sigma(A_k)\lozenge L + L\lozenge \sigma(A_k))$, then $A_k = \G_k^B A$ for all $k\geq 0$.
\end{thm}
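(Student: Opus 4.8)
The plan is to mimic the proof of Theorem~\ref{thm 3.1} almost verbatim, carrying the extra bookkeeping needed for the four actions $\vp\dd,\vp\vv,\vp\dd',\vp\vv'$ and the two products $\dashv,\vdash$. First I would appeal to \cite{mainellis} to produce the unique diassociative factor system $(\vp,f)$ attached to the extension $0\xrightarrow{} A\xrightarrow{\sigma} L\xrightarrow{} B\xrightarrow{} 0$ and the section $T$, where $\vp=(\vp\dd,\vp\vv,\vp\dd',\vp\vv')$ is the chosen lift and $f=(f\dd,f\vv)$ is a pair of bilinear forms $B\times B\xrightarrow{} A$. The same factor system belongs to a second extension $0\xrightarrow{} A\xrightarrow{\iota} L_2\xrightarrow{} B\xrightarrow{} 0$, where $L_2 = A\oplus B$ carries the products $(m,i)\dashv(n,j) = (m\dashv n + \vp\dd(i)n + \vp\dd'(j)m + f\dd(i,j),\, i\dashv j)$ and $(m,i)\vdash(n,j) = (m\vdash n + \vp\vv(i)n + \vp\vv'(j)m + f\vv(i,j),\, i\vdash j)$, and $\iota(m) = (m,0)$. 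Since $(\vp,f)$ is equivalent to itself, the two extensions are equivalent, so there is an isomorphism $\tau:L\xrightarrow{} L_2$ with $\tau\sigma = \iota$.

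Then I would run an induction on $k$, the base case $A_0 = A = \G_0^BA$ being trivial. Assuming $A_n = \G_n^BA$, it suffices to verify both inclusions on generators. Generators of $A_{n+1}$ have the form $\sigma\inv(\sigma(m)*x)$ or $\sigma\inv(x*\sigma(m))$ with $m\in A_n$, $x\in L$, and $*\in\{\dashv,\vdash\}$. Writing $\tau(x) = (m_x,i_x)$ and transporting through $\tau$ exactly as in the Leibniz proof, one computes, for instance, $\sigma\inv(\sigma(m)\dashv x) = m\dashv m_x + \vp\dd'(i_x)m$ and $\sigma\inv(x\dashv\sigma(m)) = m_x\dashv m + \vp\dd(i_x)m$, with the analogous formulas for $\vdash$; here $f$ contributes nothing because one argument lies in $\sigma(A)$ and $\iota$ kills the $B$-component. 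Since $m\in A_n = \G_n^BA$, the products $m\dashv m_x$, $m_x\dashv m$, $m\vdash m_x$, $m_x\vdash m$ lie in $(\G_n^BA)\dashv A$, $A\dashv(\G_n^BA)$, $(\G_n^BA)\vdash A$, $A\vdash(\G_n^BA)$ respectively, and the remaining terms lie in $\{\vp_*(i)n,\vp_*'(i)n~|~ i\in B, n\in\G_n^BA\}$; all of these are among the generators of $\G_{n+1}^BA$, so $A_{n+1}\subseteq\G_{n+1}^BA$. For the reverse inclusion, $(\G_n^BA)\dashv A = \sigma\inv(\sigma(\G_n^BA)\dashv\sigma(A))\subseteq\sigma\inv(\sigma(A_n)\lozenge L)\subseteq A_{n+1}$ and likewise for the other three product-generators, while $\vp_*(i)m = \sigma\inv(T(i)*\sigma(m))\in A_{n+1}$ and $\vp_*'(i)m = \sigma\inv(\sigma(m)*T(i))\in A_{n+1}$ since $T(i)\in L$; hence $\G_{n+1}^BA\subseteq A_{n+1}$ and the induction closes.

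The only substantive difference from the Leibniz case is the larger set of generator types, so the point that needs care is the correspondence, under $\tau$, between the two products on $L_2$ and the four actions: one must check that the $A$-component of $(m,0)*(m_x,i_x)$ is $m* m_x + \vp_*'(i_x)m$ and that of $(m_x,i_x)*(m,0)$ is $m_x* m + \vp_*(i_x)m$. This is read off directly from the multiplication on $L_2$, using that each $\vp_*$ and $\vp_*'$ is linear (hence vanishes on $0$) and each $f_*$ is bilinear (hence vanishes when an argument is $0$), exactly parallel to the Leibniz computation. I therefore expect no genuine obstacle: once the diassociative factor system of \cite{mainellis} is available, the argument is routine.
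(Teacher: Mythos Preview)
Your proposal is correct and follows essentially the same approach as the paper's proof: invoke the diassociative factor system from \cite{mainellis} to pass to the equivalent extension $L_2=A\oplus B$, then run the induction on generators exactly as in Theorem~\ref{thm 3.1}, with $*$ ranging over $\dashv,\vdash$. The paper's proof is slightly terser (it writes the two products uniformly as $(m,i)*(n,j)$ rather than separately), but the structure, computations, and reverse-inclusion argument are identical to what you outline.
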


\begin{proof}
As in the Leibniz case, the work with factor systems in \cite{mainellis} yields an equivalent extension $0\xrightarrow{} A\xrightarrow{\iota} L_2\xrightarrow{} B\xrightarrow{} 0$. Let $\tau:L\xrightarrow{} L_2$ be the equivalence. Here, $L_2$ is the vector space $A\oplus B$ equipped with multiplications $(m,i)*(n,j) = (m*n +\vp_*(i)n + \vp_*'(j)m + f_*(i,j),i*j)$, and $\iota(m) = (m,0)$. Moreover, $\vp_*$ and $\vp_*'$ are the same maps as in our lift $\vp$ while $f\dd$ and $f\vv$ are the bilinear forms in some factor system of diassociative algebras.

The base case of this result is trivial since $A_0 = A = \G_0^B A$ by definition. Now assume $A_n = \G_n^BA$ for some $n\geq 0$. Also by definition, it suffices to show the inclusion of generating elements for each side of the equality. Generating elements in $A_{n+1}$ have the forms $\sigma\inv(\sigma(m)*x)$ and $\sigma\inv(x*\sigma(m))$ for $m\in A_n$ and $x\in L$. Denote $\tau(x) = (m_x,i_x)\in L_2$. We compute \begin{align*}
    \sigma\inv(\sigma(m)*x) &= \sigma\inv\tau\inv(\tau\sigma(m)*\tau(x)) \\ &= \iota\inv((m,0)*(m_x,i_x)) \\ &= m*m_x + \vp_*'(i_x)m
\end{align*} and \begin{align*}
    \sigma\inv(x*\sigma(m)) &= \sigma\inv\tau\inv(\tau(x)*\tau\sigma(m)) \\ &= \iota\inv((m_x,i_x)*(m,0)) \\ &= m_x*m + \vp_*(i_x)m.
\end{align*} Since $A_n = \G_n^BA$, one has $m_x*m\in A*(\G_n^BA)$ and $m*m_x\in (\G_n^BA)*A$, which are included in $\G_{n+1}^BA$ since $\G_{n+1}^BA$ is the $B$-invariant ideal generated by $(\G_n^BA)*A$, $A*(\G_n^BA)$, and $\{\vp_*(i)m,\vp_*'(i)m~|~ m\in \G_n^BA,i\in B\}$. Thus $\vp_*'(i_x)m,\vp_*(i_x)m\in \G_{n+1}^BA$ as well. Therefore $A_{n+1}\subseteq \G_{n+1}^BA$. Conversely, one computes \begin{align*}
    (\G_n^BA)*A = \sigma\inv(\sigma(\G_n^BA)*\sigma(A)) \subseteq \sigma\inv(\sigma(A_n)*L) \subseteq A_{n+1},\\ A*(\G_n^BA) = \sigma\inv(\sigma(A)*\sigma(\G_n^BA))\subseteq \sigma\inv(L*\sigma(A_n)) \subseteq A_{n+1}.
\end{align*} Also, let $i\in B$ and $m\in \G_n^BA = A_n$. Then $\vp_*(i)m = \sigma\inv(T(i)*\sigma(m))\in A_{n+1}$ and $\vp_*'(i)m = \sigma\inv(\sigma(m)*T(i))\in A_{n+1}$ since $T(i)\in L$. Therefore $\G_{n+1}^BA\subseteq A_{n+1}$.
\end{proof}

\begin{defn}
Given $B$, $A$, and the tuple $\Phi$, we say that $A$ is $B$\textit{-nilpotent of class $u$}, written $\nil_B A = u$, if $\G_u^BA = 0$ but $\G_{u-1}^BA \neq 0$.
\end{defn}

The following results hold similarly to the Lie and Leibniz cases. Here, $\nil L$ is used to denote the nilpotency class of a diassociative algebra $L$.

\begin{cor}
$L$ is nilpotent if and only if $B$ is nilpotent and $\G_u^B A = 0$ for some $u\geq 1$.
\end{cor}

\begin{cor}
$\max(\nil_BA,\nil B) \leq \nil L\leq \nil_B A + \nil B$.
\end{cor}

\begin{thm}
Let $\vp$ and $\psi$ be lifts of $(\Phi\dd,\Phi\vv,\Phi\dd',\Phi\vv')$ corresponding to extensions $0\xrightarrow{} A\xrightarrow{} L_{\vp}\xrightarrow{} B\xrightarrow{} 0$ and $0\xrightarrow{} A\xrightarrow{} L_{\psi}\xrightarrow{} B\xrightarrow{} 0$ respectively. Then $L_{\vp}$ is nilpotent if and only if $L_{\psi}$ is nilpotent.
\end{thm}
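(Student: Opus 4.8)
The plan is to mirror Yankosky's argument, using the two corollaries that precede this theorem as the key ingredients. The crucial observation is that both lifts $\vp$ and $\psi$ come from the \emph{same} tuple $\Phi = (\Phi\dd,\Phi\vv,\Phi\dd',\Phi\vv')$, and the sequence of $B$-invariant ideals $\G_k^BA$ was shown (in the lemma following the definition of $\G(N,\vp)$) to be independent of which lift of $\Phi$ is used to compute it. Hence the quantity $\nil_B A$ — defined purely in terms of $B$, $A$, and $\Phi$ via the vanishing of $\G_u^BA$ — is an invariant of the triple $(A,B,\Phi)$ and does not see the difference between $\vp$ and $\psi$.

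First I would invoke the first corollary for the extension $0\to A\to L_{\vp}\to B\to 0$: $L_{\vp}$ is nilpotent if and only if $B$ is nilpotent and $\G_u^BA = 0$ for some $u\geq 1$. Next I would apply the same corollary to $0\to A\to L_{\psi}\to B\to 0$: $L_{\psi}$ is nilpotent if and only if $B$ is nilpotent and $\G_u^BA = 0$ for some $u\geq 1$. But the condition ``$B$ is nilpotent and $\G_u^BA = 0$ for some $u\geq 1$'' is literally the same condition in both cases, since the $\G_k^BA$ are computed from $\Phi$ alone. Therefore $L_{\vp}$ is nilpotent if and only if $L_{\psi}$ is nilpotent, which is the claim. (Equivalently, one could phrase this through the second corollary: $\nil L_{\vp}$ and $\nil L_{\psi}$ are both squeezed between $\max(\nil_B A,\nil B)$ and $\nil_B A + \nil B$, and in particular one is finite exactly when the other is.)

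There is no real obstacle here; the work was done in establishing Theorem~4.8's analogue (the identification $A_k = \G_k^B A$) and the lift-independence lemmas. The only point requiring a word of care is making explicit that the extensions $L_{\vp}$ and $L_{\psi}$ named in the statement are precisely the ``$L_2$-type'' extensions built from the respective lifts — i.e. $L_{\vp}$ has underlying space $A\oplus B$ with products $(m,i)*(n,j) = (m*n + \vp_*(i)n + \vp_*'(j)m + f_*(i,j), i*j)$ for an appropriate factor system, and similarly for $L_{\psi}$ — so that the preceding corollaries genuinely apply to them. Once that identification is in place, the proof is the two-line application of the first corollary to each extension and the remark that the resulting conditions coincide.
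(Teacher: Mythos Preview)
Your proposal is correct and follows exactly the route the paper indicates: the theorem is deduced from the first corollary applied to each extension, together with the lift-independence of $\G_k^BA$ (so that the condition ``$B$ nilpotent and $\G_u^BA=0$'' is identical for both). One small remark: your final caveat that $L_{\vp}$ and $L_{\psi}$ must be the explicit $A\oplus B$ constructions is unnecessary, since the preceding proposition, theorem, and corollaries were all established for an arbitrary extension $0\to A\to L\to B\to 0$ with section $T$; the $L_2$-model appears only as a tool inside those proofs, and the corollaries apply directly to $L_{\vp}$ and $L_{\psi}$ as given.
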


We now state the associative case as a corollary since we have not been able to find it written down. Let $A$ and $B$ be associative algebras and consider a pair of maps $(\Phi,\Phi')$ such that $\Phi:B\xrightarrow{} \mathscr{L}(A)/\ad^l(A)$ and $\Phi':B\xrightarrow{} \mathscr{L}(A)/\ad^r(A)$. Let \textit{lifts} $(\vp,\vp')$ and $(\psi,\psi')$ of $(\Phi,\Phi')$ be defined similarly to the Leibniz case and consider their corresponding extensions $0\xrightarrow{} A\xrightarrow{} L_{(\vp,\vp')}\xrightarrow{} B\xrightarrow{} 0$ and $0\xrightarrow{} A\xrightarrow{} L_{(\psi,\psi')}\xrightarrow{} B\xrightarrow{} 0$ respectively. 

\begin{cor}
$L_{(\vp,\vp')}$ is nilpotent if and only if $L_{(\psi,\psi')}$ is nilpotent.
\end{cor}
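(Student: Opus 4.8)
The plan is to recover this as a special case of the diassociative theorem, by viewing an associative algebra as a diassociative algebra in which the two products coincide. Indeed, if $\dashv = \vdash$ then axioms D1 and D3 become trivial and D2 becomes the associative law, so an associative algebra is precisely a diassociative algebra with $\dashv = \vdash$; we regard $A$ and $B$ in this way. Under this identification the four actions $\vp\dd,\vp\vv,\vp\dd',\vp\vv'$ of Section 4 collapse to the two maps $\vp,\vp'$, the operators $\ad\dd^l$ and $\ad\vv^l$ both become $\ad^l$ while $\ad\dd^r$ and $\ad\vv^r$ both become $\ad^r$, the associative pair $(\Phi,\Phi')$ becomes the diassociative tuple $(\Phi,\Phi,\Phi',\Phi')$, and an associative lift $(\vp,\vp')$ becomes the diassociative lift $(\vp,\vp,\vp',\vp')$.

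First I would match the two extension constructions. As in Sections 3 and 4, $L_{(\vp,\vp')}$ is built via the factor systems of \cite{mainellis} on the vector space $A\oplus B$ with product $(m,i)(n,j) = (mn + \vp(i)n + \vp'(j)m + f(i,j), ij)$ for a cocycle $f$ belonging to $(\vp,\vp')$. When $A$, $B$, and this algebra are regarded as diassociative with coinciding products, the associated diassociative factor system is automatically symmetric, namely $(\vp,\vp,\vp',\vp',f,f)$, and the diassociative extension it determines is exactly $L_{(\vp,\vp')}$; thus $L_{(\vp,\vp')} = L_{(\vp,\vp,\vp',\vp')}$ as diassociative algebras, and likewise for $(\psi,\psi')$.

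Next I would verify that the two notions of nilpotency agree for algebras with $\dashv = \vdash$. For such an algebra $D\lozenge D' = D\dashv D' + D\vdash D' = DD'$, so the sequence $D^{\{0\}} = D$, $D^{\{k+1\}} = D\lozenge D^{\{k\}}$ is just the ordinary descending power series of the associative algebra $D$; by Lemma \ref{nilpotent equality} it coincides with $D^k$, so $D$ is nilpotent as a diassociative algebra exactly when it is nilpotent as an associative algebra. Combining the three steps, the final theorem of the previous section, applied to the lifts $(\vp,\vp,\vp',\vp')$ and $(\psi,\psi,\psi',\psi')$ of $(\Phi,\Phi,\Phi',\Phi')$, shows that $L_{(\vp,\vp')}$ is nilpotent if and only if $L_{(\psi,\psi')}$ is nilpotent.

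The one point requiring care is the bookkeeping underlying the first two steps: one should confirm, against \cite{mainellis}, that the associative factor systems of $A$ by $B$ are precisely the diassociative factor systems with equal components, so that the words ``lift'' and ``corresponding extension'' denote the same data in both settings. There is no analytic or combinatorial obstacle; in particular Lemma \ref{nilpotent equality} is exactly what prevents the distinct diassociative notions of nilpotency (left, right, two-sided) from introducing a gap. Alternatively one could bypass the diassociative case and rerun the argument of Section 3 for associative algebras, using associativity in place of Lemma \ref{left norming}, but the reduction above is shorter.
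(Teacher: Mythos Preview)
Your proposal is correct and matches the paper's approach exactly: the paper states this result as an immediate corollary of the diassociative theorem, obtained by regarding an associative algebra as a diassociative algebra with $\dashv=\vdash$, and gives no further argument. Your write-up supplies the bookkeeping (identifying lifts, factor systems, and nilpotency notions under this specialization) that the paper leaves implicit.
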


\section{Examples}
The first two examples demonstrate that extensions corresponding to lifts of the same tuple need not have the same nilpotency class. We provide an example for the non-Lie Leibniz case as well as for the diassociative case.

\begin{ex}
Let $A=\langle x,y,z\rangle$ and $B=\langle w\rangle$ be abelian Leibniz algebras and consider two extensions $L_1$ and $L_2$ of $A$ by $B$. Let $L_1=\langle x,y,z,w\rangle$ have nonzero multiplications given by $w^2 = x$, $wx = y$, and $wy=z$. Then $L_1^2 = \langle x,y,z\rangle$, $L_1^3 = \langle y,z\rangle$, $L_1^4 = \langle z\rangle$, and $L_1^5=0$, making $L_1$ nilpotent of class 5. Now let $L_2=\langle x,y,z,w\rangle$ have nonzero multiplications given by $wx=y$ and $wy=z$. Then $L_2^2 = \langle y,z\rangle$, $L_2^3 = \langle z\rangle$, and $L_2^4 = 0$, making $L_2$ nilpotent of class $4$. Observe that $L_1$ and $L_2$ correspond to lifts of the same tuple, yet have different nilpotency classes. Indeed, $A$ is abelian, and hence $\ad^l(M)$ and $\ad^r(M)$ are zero, making $(\Phi,\Phi') = (\vp,\vp')$ for any lift of $(\Phi,\Phi')$. In this case, $\Phi(w)x = \vp(w)x= y$ and $\Phi(w)y= \vp(w)y=z$ for both. Also $\Phi'(w) = 0$.

We would also like to compute $A_k$ and $\G_k^BA$. Note that, since $A^2=0$, one needs only consider the actions of $\vp$ and $\vp'$ on $A$ when computing $\G_k^BA$. As predicted, $A_k=\G_k^BA$ for all $k$. One has \begin{align*}
    A_0 &= A = \G_0^BA,\\
    A_1 &= \langle y,z\rangle = \G_1^BA,\\
    A_2 &= \langle z\rangle = \G_2^BA,\\
    A_3 &= 0 = \G_3^BA,
\end{align*} and $A_k = 0 = \G_k^BA$ otherwise.
\end{ex}

\begin{ex}
Now for a diassociative example. Let $A=\langle x,y\rangle$ and $B=\langle u,v\rangle$ be abelian algebras and $L_{\vp}$ be an extension of $A$ by $B$ having nonzero multiplications $u\dashv u = x$, $u\vdash u = x+y$, $v\dashv v = y$, $v\vdash v = x+y$, and $v\vdash u = x+y = u\vdash v$. This diassociative algebra is a special case of the isomorphism type $Dias_4^1$ in Theorem 4.2 of \cite{di basri}. One computes $L_{\vp}^2 = \langle x,y\rangle$ and $L_{\vp}^3 = 0$; hence $L_{\vp}$ is nilpotent of class 3. We also note that the action of $B$ on $A$ is entirely zero, i.e. $\vp\dd = \vp\vv = \vp\dd' = \vp\vv' = 0$. Moreover, $A$ is again abelian, and hence all lifts of the natural $\Phi$ tuple are equal. To finish the point, the abelian extension $L_{ab}$ of $A$ by $B$ corresponds to the same zero-lift, but has nilpotency class 2.
\end{ex}

We conclude with an example in which $A$ is nonabelian and hence the lifts are allowed to vary by adjoint operators. In this example, however, our nilpotency classes turn out to be the same. We note that the algebras in this case are both associative and Leibniz.

\begin{ex}
Let $A=\langle x,y,z\rangle$ and $B=\langle w\rangle$ be associative algebras with only nonzero multiplications $x^2=y^2=z$. Consider two extensions $L_{(\vp,\vp')}$ and $L_{(\psi,\psi')}$ of $A$ by $B$. Let $L_{(\vp,\vp')}$ have nonzero multiplications given by $x^2=y^2=xw=z$, $wx=-z$ and let $L_{(\psi,\psi')}$ have nonzero multiplications given by $x^2=y^2=z$. These algebras are clearly nilpotent of class 3 since both have center $\langle z\rangle$ equal to their derived subalgebras. One computes $\vp(w)x = -z$, $\vp'(w)x = z$, and $\vp(w)y = \vp'(w)y = \vp(w)z = \vp'(w)z =0$. Also $\psi(w) = \psi'(w)=0$. Thus $\vp(w) = \psi(w) - \ad^l(x)$ and $\vp'(w) = \psi'(w) + \ad^r(x)$, and so we have lifts $(\vp,\vp')$ and $(\psi,\psi')$ that vary by adjoint operators.
\end{ex}

\section*{Acknowledgements}
The author would like to thank Ernest Stitzinger for the many helpful discussions.


\begin{thebibliography}{}


\bibitem{loday cup product} Loday, Jean-Louis. ``Cup-product for Leibniz Cohomology and Dual Leibniz Algebras.'' \textit{Mathematica Scandinavica}, Vol. 77, No. 2, 1995, pp. 189-196.

\bibitem{loday dialgebras} Loday, Jean-Louis. ``Dialgebras," in \textit{Dialgebras and related operads}, pp.7-66, Lecture Notes in Math., 1763, Springer, Berlin 2001.	arXiv:math/0102053.

\bibitem{mainellis} Mainellis, Erik. ``Nonabelian Extensions and Factor Systems for the Algebras of Loday.'' arXiv:2105.00116.

\bibitem{di basri} Rikhsiboev, Ikrom; Isamiddin Rakhimov; Witriany Basri. ``Four-Dimensional Nilpotent Diassociative Algebras.'' \textit{Journal of Generalized Lie Theory and Applications} Vol. 9, Issue 1, 2015.

\bibitem{basri} Rikhsiboev, Ikrom; Isamiddin Rakhimov; Witriany Basri. ``The Description of Dendriform Algebra Structures on Two-Dimensional Complex Space.'' \textit{Journal of Algebra, Number Theory: Advances and Applications} Vol. 4, No. 1, 2010.

\bibitem{schafer} Schafer, J. ``Extensions of Nilpotent Groups.'' \textit{Houston Journal of Mathematics} Vol. 21, No. 1, 1995. pp.1-16.

\bibitem{yankosky} Yankosky, Bill. ``On Nilpotent Extensions of Lie Algebras.'' \textit{Houston Journal of Mathematics} Vol. 27, No. 4, 2001.

\end{thebibliography}
\end{document}